\newtheorem{theorem}{Theorem}
\newtheorem{lemma}[theorem]{Lemma}
\newenvironment{proof}[1][Proof]{\noindent\textbf{#1} }{\ \rule{0.5em}{0.5em}}
\begin{document}

\begin{center}
\vspace{1.5cm}

\bigskip

{\Large \textsf{A solution to Open Problem 3.137 by O. Furdui }}\\[0pt]
{\Large \textsf{on multiple factorial series}}

\vspace{1.5cm}

{\large \textbf{Ulrich Abel}}\medskip

{\large \textit{Technische Hochschule Mittelhessen}}\\[0pt]
{\large \textit{Fachbereich MND\\[0pt]
Wilhelm-Leuschner-Stra\ss e 13, 61169 Friedberg, Germany}}\\[0pt]
{\large \textit{(E-mail: Ulrich.Abel@\textit{mnd.thm.de})}}\\[0pt]
\vspace{1.5cm}

\vspace{2cm}
\end{center}

\vfill

%%%                            SECOND PAGE

\vfill
{\large \textbf{Abstract.}}

\bigskip

In this paper we give a closed expression for the series\ 
\begin{equation*}
\dsum\limits_{n_{1}=1}^{\infty }\cdots \dsum\limits_{n_{k}=1}^{\infty }\frac{%
n_{1}\cdots n_{k}}{\left( n_{1}+\cdots +n_{k}\right) !},
\end{equation*}%
for all $k=1,2,3,\ldots $, solving Open Problem 3.137 in the recent book 
\cite[Chapt. 3.7, problem 3.137]{Furdui-book-2013} by Furdui. The method is
based on properties of divided differences. It applies also to similar
series and certain generalizations.

\bigskip

\smallskip \emph{Mathematics Subject Classification (2010):} 40B05%
%Multiple sequences and series
, 65B10%Summation of series
.

\smallskip \emph{Keywords:} Multiple factorial series, summation of series,
divided differences.

\vfill
\newpage

%%%
\makeatletter%
\renewcommand{\@evenhead}{\thepage\hfill{\small\sc
U. Abel}\hfill}%
\renewcommand{\@oddhead}{\hfill{\small\sc
A solution to Open Problem 3.137 by O. Furdui}\hfill\thepage} %
%%%

\section{Introduction}

In his recent book \cite[Chapt. 3.7, Problem 3.137]{Furdui-book-2013} O.
Furdui states the open problem to give closed expressions for the multiple
factorial series 
\begin{equation*}
S_{k}:=\dsum\limits_{n_{1}=1}^{\infty }\cdots \dsum\limits_{n_{k}=1}^{\infty
}\frac{n_{1}\cdots n_{k}}{\left( n_{1}+\cdots +n_{k}\right) !},
\end{equation*}%
for all integers $k\geq 4$. Moreover, he conjectured that $S_{k}$ is, for
all integers $k\in \mathbb{N}$, a rational multiple of Euler's number $e$,
i.e., $S_{k}=a_{k}e$ with $a_{k}\in \mathbb{Q}$. It is easy to see that $%
S_{1}=e$. Using the Beta function technique Furdui \cite[Problem 3.114 and
3.118, respecively]{Furdui-book-2013} shows $a_{2}=2/3$ and $a_{3}=31/120$.

More generally, Furdui considers the series 
\begin{eqnarray*}
S_{k,0} &:&=\dsum\limits_{n_{1}=1}^{\infty }\cdots
\dsum\limits_{n_{k}=1}^{\infty }\frac{1}{\left( n_{1}+\cdots +n_{k}\right) !}%
, \\
S_{k,j} &:&=\dsum\limits_{n_{1}=1}^{\infty }\cdots
\dsum\limits_{n_{k}=1}^{\infty }\frac{n_{1}\cdots n_{j}}{\left( n_{1}+\cdots
+n_{k}\right) !}\text{ \qquad }\left( 1\leq j\leq k\right) .
\end{eqnarray*}%
Obviously, we have $S_{k}=S_{k,k}$. Furdui determines the exact values $%
S_{k,1}=\left( k!\right) ^{-1}e$ and $S_{3,2}=\left( 5/24\right) e$ \cite[%
Problems 3.117 and 3.120, respectively]{Furdui-book-2013}. Also an
expression for $S_{k,0}$ is given \cite[Problem 3.119]{Furdui-book-2013}: 
\begin{equation}
S_{k,0}=\left( -1\right) ^{k}\left( 1-e\dsum\limits_{j=0}^{k-1}\frac{\left(
-1\right) ^{j}}{j!}\right)  \label{expression-for-Sk0}
\end{equation}%
More generally, one defines, for real numbers $x_{1},\ldots ,x_{k}$, the
function 
\begin{equation}
S_{k}\left( x_{1},\ldots ,x_{k}\right) :=\dsum\limits_{n_{1}=1}^{\infty
}\cdots \dsum\limits_{n_{k}=1}^{\infty }\frac{x_{1}^{n_{1}}\cdots
x_{k}^{n_{k}}}{\left( n_{1}+\cdots +n_{k}\right) !}.  \label{def-sk(x)}
\end{equation}%
Closed expressions for $S_{k}\left( x_{1},\ldots ,x_{k}\right) $ in the
special case $k=2$ can be found in \cite[Problem 3.115 (see also Problem
3.116)]{Furdui-book-2013}.

In this note we give an affirmative answer on Furdui's conjecture $%
e^{-1}S_{k}=a_{k}\in \mathbb{Q}$ and provide an explicit representation of $%
a_{k}$ in the form 
\begin{equation*}
a_{k}=\frac{1}{\left( 2k-1\right) !}\left. \left[ \left( \frac{d}{dx}\right)
^{2k-1}\left( x^{k-1}e^{x}\right) \right] \right\vert _{x=1}.
\end{equation*}%
Moreover, we derive similar expressions for $S_{k,j}$. Our main result
considers even more general sums. Finally, we represent $S_{k}\left(
x_{1},\ldots ,x_{k}\right) $ as a finite sum, for all $k\in \mathbb{N}$.

The proofs are based on divided differences. For pairwise different real or
complex numbers $x_{0},\ldots ,x_{k}$, in most textbooks, the divided
differences of a function $f$ are defined recursively:\ $\left[ x_{0};f%
\right] =f\left( x_{0}\right) $, \ldots , 
\begin{equation*}
\left[ x_{0},\ldots ,x_{k};f\right] =\frac{\left[ x_{1},\ldots ,x_{k};f%
\right] -\left[ x_{0},\ldots ,x_{k-1};f\right] }{x_{k}-x_{0}}.
\end{equation*}

\section{Main results}

Let 
\begin{equation*}
g\left( z\right) =\dsum\limits_{n=0}^{\infty }g_{n}z^{n}
\end{equation*}%
be a power series converging for $\left\vert z\right\vert <R$ with $R>1$.
For integers $\ell \geq 0$, put 
\begin{equation*}
g_{\ell }\left( z\right) =\dsum\limits_{n=0}^{\infty }g_{n+\ell }z^{n}.
\end{equation*}%
Hence $g_{0}=g$ and, for $\ell \geq 1$, 
\begin{equation*}
z^{\ell }g_{\ell }\left( z\right) =g\left( z\right)
-\dsum\limits_{n=0}^{\ell -1}g_{n}z^{n}.
\end{equation*}%
For $k\in \mathbb{N}$, define 
\begin{equation}
G_{k,\ell }\left( x_{1},\ldots ,x_{k}\right) =\dsum\limits_{n_{1}=0}^{\infty
}\cdots \dsum\limits_{n_{k}=0}^{\infty }g_{n_{1}+\cdots +n_{k}+\ell }\cdot
x_{1}^{n_{1}}\cdots x_{k}^{n_{k}}.  \label{def-Gkl}
\end{equation}

Our main result are presented in the following theorems.

\begin{theorem}
\label{th-main-theorem-1}With the above notation, for all $k\in \mathbb{N}$
and integers $\ell \geq 0$,$\ $%
\begin{equation*}
G_{k,\ell }\left( x_{1},\ldots ,x_{k}\right) =\left[ x_{1},\ldots
,x_{k};z^{\ell -1}g_{\ell }\left( z\right) \right] .
\end{equation*}
\end{theorem}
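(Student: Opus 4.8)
The plan is to reduce the identity, for pairwise distinct nodes $x_1,\ldots,x_k$, to a single computation on monomials and then to reassemble, recovering coincident nodes only at the very end. I would begin from the classical closed form of the divided difference, $[x_1,\ldots,x_k;f]=\sum_{i=1}^{k}f(x_i)\big/\prod_{j\neq i}(x_i-x_j)$, which exhibits the operator as a \emph{finite} linear combination of point evaluations. Since $g_\ell$ converges at each node (this is where $R>1$ gives the needed room), that finite combination commutes freely with the power-series expansion of the bracketed function, so the right-hand side becomes a series of divided differences of monomials. Everything then hinges on evaluating those.

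The crux is the symmetric-function identity
\[
[x_1,\ldots,x_k;z^{m}]=h_{m-k+1}(x_1,\ldots,x_k)=\sum_{n_1+\cdots+n_k=m-k+1}x_1^{n_1}\cdots x_k^{n_k},
\]
the complete homogeneous symmetric polynomial of degree $m-k+1$, understood to vanish for $m<k-1$. I expect this to be the load-bearing step. I would establish it either by recognising $\sum_{i}x_i^{m}\big/\prod_{j\neq i}(x_i-x_j)$ directly as $h_{m-k+1}$, or by induction on $k$ through the defining recursion for divided differences, feeding in the symmetric-function recursion $h_s(x_1,\ldots,x_k)=h_s(x_1,\ldots,x_{k-1})+x_k\,h_{s-1}(x_1,\ldots,x_k)$ to carry the inductive step.

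With the monomial identity in hand, I would expand the bracketed function as $\sum_{m\geq0}g_{m+\ell}\,z^{m+k-1}$ and apply the identity termwise; matching degrees shows that the factor $z^{k-1}$ in front of $g_\ell$ (the same power $x^{k-1}$ that surfaces in the stated formula for $a_k$) is precisely what converts $[x_1,\ldots,x_k;z^{m+k-1}]$ into $h_m$, so the right-hand side collapses to $\sum_{m\geq0}g_{m+\ell}\,h_m(x_1,\ldots,x_k)$. On the left, I would group the defining $k$-fold series of $G_{k,\ell}$ by the total degree $s=n_1+\cdots+n_k$; since $\sum_{n_1+\cdots+n_k=s}x_1^{n_1}\cdots x_k^{n_k}=h_s(x_1,\ldots,x_k)$ by definition, this gives $G_{k,\ell}(x_1,\ldots,x_k)=\sum_{s\geq0}g_{s+\ell}\,h_s(x_1,\ldots,x_k)$, identical to the reassembled right-hand side term by term.

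The remaining work is a matter of rigor rather than of ideas. To regroup the $k$-fold series by total degree and to pass the divided difference through the sum I need absolute convergence, which follows from $|x_i|<R$ together with the crude bound $h_s(|x_1|,\ldots,|x_k|)\leq\binom{s+k-1}{k-1}\rho^{\,s}$ with $\rho=\max_i|x_i|$, the polynomial factor $\binom{s+k-1}{k-1}$ leaving the radius of convergence unchanged. Finally I would drop the distinctness assumption: both sides are real-analytic in $(x_1,\ldots,x_k)$ on the polydisc of radius $R$, and the divided difference extends continuously (confluently) to coincident nodes, so the identity persists at $x_1=\cdots=x_k=1$, the case needed for the application. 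The one place demanding care is keeping the index shift in the monomial identity synchronised with the degree shift built into the bracketed function; a slip there is the only thing that would derail the computation.
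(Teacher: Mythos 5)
Your proposal is correct and takes essentially the same route as the paper: your load-bearing identity $\left[ x_{1},\ldots ,x_{k};z^{m}\right] =h_{m-k+1}\left( x_{1},\ldots ,x_{k}\right) $ is precisely Popoviciu's lemma (Lemma~\ref{lemma-popoviciu}), and your regrouping of $G_{k,\ell }$ by total degree followed by termwise application of the divided difference is exactly the paper's proof, to which you add only the convergence estimate and the confluent-node limit that the paper leaves implicit. One point worth recording: in working with $z^{k-1}g_{\ell }\left( z\right) $ you have tacitly corrected a typo in the printed statement, which reads $z^{\ell -1}$; the paper's own proof, Theorem~\ref{th-main-theorem-2}, and the application to $S_{k,j}$ all confirm that $z^{k-1}$ is intended (the printed version already fails for $k=1$, $\ell =0$, where $G_{1,0}\left( x_{1}\right) =g\left( x_{1}\right) $ but $\left[ x_{1};z^{-1}g_{0}\left( z\right) \right] =g\left( x_{1}\right) /x_{1}$).
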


\begin{theorem}
\label{th-main-theorem-2}Let $k,j$ be integers such that $1\leq j\leq k$ and
let $i_{1},\ldots ,i_{j}\in \left\{ 1,\ldots ,k\right\} $ be pairwise
different integers. Then, 
\begin{equation*}
\lim_{x_{1},\ldots ,x_{k}\rightarrow x}\frac{\partial ^{j}}{\partial
x_{i_{1}}\cdots \partial x_{i_{j}}}G_{k,\ell }\left( x_{1},\ldots
,x_{k}\right) =\frac{1}{\left( k+j-1\right) !}\left. \left[ \left( \frac{d}{%
dz}\right) ^{k+j-1}z^{k-1}g_{\ell }\left( z\right) \right] \right\vert
_{z=x}.
\end{equation*}
\end{theorem}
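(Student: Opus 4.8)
The plan is to build directly on Theorem~\ref{th-main-theorem-1}. Setting $\varphi(z)=z^{k-1}g_\ell(z)$, that theorem identifies $G_{k,\ell}(x_1,\ldots,x_k)$ with the divided difference $\left[x_1,\ldots,x_k;\varphi\right]$ for pairwise distinct nodes. The statement then reduces to two facts about divided differences: that differentiating with respect to one node \emph{doubles} that node, and that divided differences extend continuously to coincident nodes with the fully confluent value equal to a derivative. The rule I would record first is
\begin{equation*}
\frac{\partial}{\partial x_i}\left[x_1,\ldots,x_k;f\right]=\left[x_1,\ldots,x_i,x_i,\ldots,x_k;f\right],
\end{equation*}
valid for any analytic $f$ as long as the nodes remain pairwise distinct, so that the right-hand side is genuinely a divided difference with $x_i$ of multiplicity two.

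I would then iterate. Since $i_1,\ldots,i_j$ are pairwise different, after doubling $x_{i_1}$ the node $x_{i_2}$ is still simple, so the rule applies again, and so on; as the nodes stay distinct the mixed partials commute and the order is immaterial. Hence, for distinct $x_1,\ldots,x_k$,
\begin{equation*}
\frac{\partial^{j}}{\partial x_{i_1}\cdots\partial x_{i_j}}G_{k,\ell}(x_1,\ldots,x_k)=\left[x_1,\ldots,x_k,x_{i_1},\ldots,x_{i_j};\varphi\right],
\end{equation*}
a divided difference on $k+j$ nodes, with the $j$ selected nodes occurring twice. Letting $x_1,\ldots,x_k\to x$ collapses all $k+j$ nodes onto $x$; using that the confluent value of a divided difference of $N+1$ equal nodes is $\varphi^{(N)}(x)/N!$, with $N+1=k+j$ this gives
\begin{equation*}
\left[\underbrace{x,\ldots,x}_{k+j};\varphi\right]=\frac{1}{(k+j-1)!}\varphi^{(k+j-1)}(x)=\frac{1}{(k+j-1)!}\left.\left[\left(\frac{d}{dz}\right)^{k+j-1}z^{k-1}g_\ell(z)\right]\right\vert_{z=x},
\end{equation*}
which is the asserted identity.

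For the differentiation rule I would stay within the power-series framework of the paper: by linearity and by uniform convergence on compact subsets of $\{\left\vert z\right\vert<R\}$ it suffices to verify the identity for monomials $f(z)=z^{m}$. There $\left[x_1,\ldots,x_k;z^{m}\right]=h_{m-k+1}(x_1,\ldots,x_k)$ is the complete homogeneous symmetric polynomial, and a direct comparison of coefficients gives $\partial_{x_i}h_{d}(x_1,\ldots,x_k)=h_{d-1}(x_1,\ldots,x_i,x_i,\ldots,x_k)$, which is exactly the rule. I expect the two analytic points to be where the real care lies: justifying term-by-term differentiation of the multiple series on a neighborhood of the diagonal inside the polydisc of convergence (so that both the representation and its derivatives are legitimate), and justifying the confluent limit, i.e. the continuity of divided differences in their nodes together with their evaluation at fully coincident nodes. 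Both are standard facts, but they carry the analytic weight; the remainder is the bookkeeping of repeatedly doubling nodes.
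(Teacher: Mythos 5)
Your argument is correct, and it reaches the theorem by a genuinely different route from the paper. The paper, like you, starts from Theorem~\ref{th-main-theorem-1}, but then invokes its Lemma~\ref{lemma-divdiff-derivatives}: the mixed partials of $\left[ x_{1},\ldots ,x_{k};f\right]$ are computed by differentiating the Hermite--Genocchi integral representation of Lemma~\ref{lemma-divdiff-representation} under the integral sign, which brings down the factors $\left( 1-t_{1}\right)\left( t_{1}-t_{2}\right)\cdots\left( t_{j-1}-t_{j}\right)$; passing to the limit $x_{1},\ldots ,x_{k}\rightarrow x$ and evaluating the resulting simplex integral (inductively equal to $1/\left( k+j-1\right) !$) produces the constant directly. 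You instead work with confluent divided differences: differentiation with respect to a simple node appends a copy of that node, iterating over the pairwise different indices $i_{1},\ldots ,i_{j}$ yields a divided difference on $k+j$ nodes, and full confluence gives $\varphi ^{\left( k+j-1\right) }\left( x\right) /\left( k+j-1\right) !$. Your verification of the doubling rule on monomials via complete homogeneous symmetric polynomials is sound (and pleasingly parallel to the paper's own use of Popoviciu's formula, Lemma~\ref{lemma-popoviciu}, in proving Theorem~\ref{th-main-theorem-1}), and the iteration is legitimate because each index $i_{\nu}$ still occupies a single slot when its turn comes. The trade-off is this: the paper's integral route is self-contained and needs only enough smoothness for Lemma~\ref{lemma-divdiff-representation}, whereas the two ``standard facts'' you lean on --- continuity of divided differences up to coincident nodes and the confluent value $f^{\left( N\right) }\left( x\right) /N!$ at $N+1$ equal nodes --- are themselves most commonly proved by the very Hermite--Genocchi representation the paper differentiates, so your confluence facts package exactly the analytic content the paper carries out explicitly (the simplex integral's value $1/\left( k+j-1\right) !$ is the confluent-value statement in disguise). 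One simplification available to you: since $G_{k,\ell }$ is an absolutely convergent power series on the polydisc $\max_{i}\left\vert x_{i}\right\vert <r$ for every $r<R$ (the coefficients satisfy $\left\vert g_{n}\right\vert \leq Cr^{-n}$), it is analytic there, so term-by-term differentiation is automatic and its mixed partials are continuous; the limit in the statement is then simply the diagonal value, which shortens the analytic bookkeeping you flagged at the end.
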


For convenience, we define, for $k,\ell \in \mathbb{N}$ and real numbers $%
x_{1},\ldots ,x_{k}$, 
\begin{equation}
f_{k,\ell }\left( x_{1},\ldots ,x_{k}\right)
:=\dsum\limits_{n_{1}=0}^{\infty }\cdots \dsum\limits_{n_{k}=0}^{\infty }%
\frac{x_{1}^{n_{1}}\cdots x_{k}^{n_{k}}}{\left( n_{1}+\cdots +n_{k}+\ell
\right) !}  \label{def-fkl(x)}
\end{equation}%
In the special case of the exponential function $g=\exp $, Theorem~\ref%
{th-main-theorem-1} provides the representation 
\begin{equation}
f_{k,\ell }\left( x_{1},\ldots ,x_{k}\right) =\left[ x_{1},\ldots
,x_{k};x^{\ell -1}\exp _{\ell }\left( x\right) \right] .
\label{fkl-as-divdiff}
\end{equation}%
With regard to the series $S_{k,j}$ as defined in the Introduction it
follows that 
\begin{eqnarray*}
S_{k,j} &=&\dsum\limits_{n_{1}=0}^{\infty }\cdots
\dsum\limits_{n_{j}=0}^{\infty }\dsum\limits_{n_{j+1}=1}^{\infty }\cdots
\dsum\limits_{n_{k}=1}^{\infty }\frac{n_{1}\cdots n_{j}}{\left( n_{1}+\cdots
+n_{k}\right) !} \\
&=&\dsum\limits_{n_{1}=0}^{\infty }\cdots \dsum\limits_{n_{k}=0}^{\infty }%
\frac{n_{1}\cdots n_{j}}{\left( n_{1}+\cdots +n_{k}+k-j\right) !} \\
&=&\frac{\partial ^{j}f_{k,k-j}}{\partial x_{1}\cdots \partial x_{j}}\left(
1,\ldots ,1\right) .
\end{eqnarray*}%
Hence, Theorem~\ref{th-main-theorem-1} implies the following theorem as an
immediate corollary.

\begin{theorem}
\label{th-expression-Skj}Let $k,j$ be integers such that $0\leq j\leq k$.
Then the series $S_{k,j}$ possess the representations 
\begin{equation*}
S_{k,j}=\frac{1}{\left( k+j-1\right) !}\left. \left[ \left( \frac{d}{dz}%
\right) ^{k+j-1}z^{k-1}\exp _{k-j}\left( z\right) \right] \right\vert _{z=1}.
\end{equation*}
\end{theorem}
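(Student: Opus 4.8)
The plan is to read off the claimed formula directly from Theorem~\ref{th-main-theorem-2}, after specializing the power series $g$ to the exponential function and choosing the shift $\ell$ so that the factorial argument matches that of $S_{k,j}$. First I would record the elementary observation that, for $g=\exp$, one has $g_n=1/n!$, hence $g_\ell=\exp_\ell$ and $G_{k,\ell}=f_{k,\ell}$; thus the functions $f_{k,\ell}$ of (\ref{def-fkl(x)}) are exactly the instances of $G_{k,\ell}$ to which Theorems~\ref{th-main-theorem-1} and~\ref{th-main-theorem-2} apply. Since $g=\exp$ is entire, the multiple series defining $f_{k,k-j}$ converges on every polydisc and may be differentiated term by term, so no convergence issue arises at the evaluation point $(1,\ldots,1)$.

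For the case $1\le j\le k$, I would start from the identity $S_{k,j}=\dfrac{\partial^{j}f_{k,k-j}}{\partial x_{1}\cdots\partial x_{j}}(1,\ldots,1)$ established just above the statement, and then invoke Theorem~\ref{th-main-theorem-2} with $g=\exp$, $\ell=k-j$, differentiation indices $(i_1,\ldots,i_j)=(1,\ldots,j)$, and confluence point $x=1$. This turns the left-hand side into $\dfrac{1}{(k+j-1)!}\bigl[(d/dz)^{k+j-1}z^{k-1}\exp_{k-j}(z)\bigr]_{z=1}$, which is precisely the asserted formula. The only point requiring care is that Theorem~\ref{th-main-theorem-2} delivers the confluent limit $x_1,\ldots,x_k\to 1$, whereas the identity for $S_{k,j}$ is a genuine evaluation of the mixed partial at $(1,\ldots,1)$; these agree because $f_{k,k-j}$ is analytic in a neighbourhood of the diagonal point, so its $j$-th mixed partial is continuous there and the limit equals the value.

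The boundary case $j=0$ is not covered by Theorem~\ref{th-main-theorem-2} (which assumes $j\ge1$), and I would treat it directly through Theorem~\ref{th-main-theorem-1}. Here $S_{k,0}=f_{k,k}(1,\ldots,1)=G_{k,k}(1,\ldots,1)$, and Theorem~\ref{th-main-theorem-1} with $\ell=k$ gives $G_{k,k}(x_1,\ldots,x_k)=[x_1,\ldots,x_k;z^{k-1}\exp_k(z)]$; note that for $\ell=k$ the exponent $z^{\ell-1}$ already equals $z^{k-1}$, in agreement with the statement. Letting all nodes coalesce at $1$ and using the confluent form of the divided difference, $[\,\underbrace{1,\ldots,1}_{k};h\,]=h^{(k-1)}(1)/(k-1)!$, yields $S_{k,0}=\dfrac{1}{(k-1)!}\bigl[(d/dz)^{k-1}z^{k-1}\exp_k(z)\bigr]_{z=1}$, the $j=0$ instance of the claim.

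I do not expect a genuine obstacle, since the theorem is a corollary: the substance resides in Theorems~\ref{th-main-theorem-1} and~\ref{th-main-theorem-2}. The most delicate bookkeeping is choosing the shift $\ell=k-j$ so that the factorial argument $n_1+\cdots+n_k+k-j$ produced by the reindexing agrees with the $\ell$ appearing in $\exp_\ell$, and confirming that the confluent limit coincides with the value of the mixed partial at the diagonal; both become routine once analyticity is invoked.
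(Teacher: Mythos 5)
Your proposal is correct and follows essentially the same route as the paper: the reindexing identity $S_{k,j}=\frac{\partial^{j}f_{k,k-j}}{\partial x_{1}\cdots\partial x_{j}}(1,\ldots,1)$ combined with Theorem~\ref{th-main-theorem-2} for $g=\exp$, $\ell=k-j$, $x=1$, which is exactly how the paper obtains the theorem as an immediate corollary. You are in fact slightly more careful than the paper, since you treat the boundary case $j=0$ explicitly via Theorem~\ref{th-main-theorem-1} and the confluent divided difference (also noting that the exponent $z^{\ell-1}$ versus $z^{k-1}$ is harmless there), and you justify that the confluent limit agrees with the evaluation of the mixed partial at the diagonal.
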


In the special case $j=0$, we obtain 
\begin{equation*}
S_{k,0}\equiv \dsum\limits_{n_{1}=1}^{\infty }\cdots
\dsum\limits_{n_{k}=1}^{\infty }\frac{1}{\left( n_{1}+\cdots +n_{k}\right) !}%
=\frac{1}{\left( k-1\right) !}\left. \left[ \left( \frac{d}{dz}\right) ^{k-1}%
\frac{e^{z}-1}{z}\right] \right\vert _{z=1}
\end{equation*}%
and an application of the Leibniz rule immediately leads to formula $\left( %
\ref{expression-for-Sk0}\right) $. In the cases $1\leq j\leq k$ the formula
of Theorem~\ref{th-expression-Skj} simplifies to 
\begin{equation*}
S_{k,j}=\frac{1}{\left( k+j-1\right) !}\left. \left[ \left( \frac{d}{dz}%
\right) ^{k+j-1}z^{j-1}e^{z}\right] \right\vert _{z=1}.
\end{equation*}%
Application of the Leibniz rule yields the explicit formula 
\begin{equation*}
S_{k,j}=e\sum_{i=0}^{j-1}\binom{j-1}{i}\frac{1}{\left( k+j-i-1\right) !}.
\end{equation*}%
Hence, the series $S_{k,j}$ are rational multiples of $e$ for $j=1,\ldots ,k$%
. We list some initial values:\ 
\begin{equation*}
\begin{tabular}{c|cccccc}
$k\backslash j$ & $0$ & $1$ & $2$ & $3$ & $4$ & $5$ \\ \hline
\multicolumn{1}{r|}{$1$} & \multicolumn{1}{|r}{$e-1$} & \multicolumn{1}{r}{$%
1 $} & \multicolumn{1}{r}{} & \multicolumn{1}{r}{} & \multicolumn{1}{r}{} & 
\multicolumn{1}{r}{} \\ 
\multicolumn{1}{r|}{$2$} & \multicolumn{1}{|r}{$1$} & \multicolumn{1}{r}{$%
1/2 $} & \multicolumn{1}{r}{$2/3$} & \multicolumn{1}{r}{} & 
\multicolumn{1}{r}{} & \multicolumn{1}{r}{} \\ 
\multicolumn{1}{r|}{$3$} & \multicolumn{1}{|r}{$e/2-1$} & \multicolumn{1}{r}{%
$1/6$} & \multicolumn{1}{r}{$5/24$} & \multicolumn{1}{r}{$31/120$} & 
\multicolumn{1}{r}{} & \multicolumn{1}{r}{} \\ 
\multicolumn{1}{r|}{$4$} & \multicolumn{1}{|r}{$1-e/3$} & \multicolumn{1}{r}{%
$1/24$} & \multicolumn{1}{r}{$1/20$} & \multicolumn{1}{r}{$43/720$} & 
\multicolumn{1}{r}{$179/2520$} & \multicolumn{1}{r}{} \\ 
\multicolumn{1}{r|}{$5$} & \multicolumn{1}{|r}{$3e/8-1$} & 
\multicolumn{1}{r}{$1/120$} & \multicolumn{1}{r}{$7/720$} & 
\multicolumn{1}{r}{$19/1680$} & \multicolumn{1}{r}{$529/40320$} & 
\multicolumn{1}{r}{$787/51840$}%
\end{tabular}%
\end{equation*}%
We close with the special case $j=k$: 
\begin{equation*}
S_{k}\equiv S_{k,k}=e\sum_{i=0}^{k-1}\binom{k-1}{i}\frac{1}{\left(
2k-i-1\right) !}.
\end{equation*}%
For convenience of the reader we list some exact and numerical values of $%
a_{k}=e^{-1}S_{k}$: 
\begin{equation*}
\begin{tabular}{c|ll}
$k$ & $a_{k}$ &  \\ \hline
\multicolumn{1}{r|}{$1$} & \multicolumn{1}{|r}{$1$} & $=1.000000$ \\ 
\multicolumn{1}{r|}{$2$} & \multicolumn{1}{|r}{$2/3$} & $\approx 0.666667$
\\ 
\multicolumn{1}{r|}{$3$} & \multicolumn{1}{|r}{$31/120$} & $\approx 0.258333$
\\ 
\multicolumn{1}{r|}{$4$} & \multicolumn{1}{|r}{$179/2520$} & $\approx
0.0710317$ \\ 
\multicolumn{1}{r|}{$5$} & \multicolumn{1}{|r}{$787/51840$} & $\approx
0.0151813$ \\ 
\multicolumn{1}{r|}{$10$} & \multicolumn{1}{|r}{} & $5.912338752837942\cdot
10^{-7}$ \\ 
\multicolumn{1}{r|}{$100$} & \multicolumn{1}{|r}{} & $2.829019570367539\cdot
10^{-158}$%
\end{tabular}%
\end{equation*}%
\bigskip

Finally, we mention that the series $S_{k}\left( x_{1},\ldots ,x_{k}\right) $
as defined in $\left( \ref{def-sk(x)}\right) $ is connected to the function $%
f_{k,\ell }$ as defined in $\left( \ref{def-fkl(x)}\right) $\ by the
relation 
\begin{equation*}
S_{k}\left( x_{1},\ldots ,x_{k}\right) =x_{1}\cdots x_{k}\cdot f_{k,k}\left(
x_{1},\ldots ,x_{k}\right) .
\end{equation*}%
Hence, by Eq. $\left( \ref{fkl-as-divdiff}\right) $, we have the new
approach 
\begin{equation*}
S_{k}\left( x_{1},\ldots ,x_{k}\right) =x_{1}\cdots x_{k}\cdot \left[
x_{1},\ldots ,x_{k};x^{k-1}\exp _{k}\left( x\right) \right] .
\end{equation*}%
Experiments with different functions $g$ may be subject of further studies.

\section{Auxiliary results and proofs}

Let $x_{0},\ldots ,x_{k}$ be pairwise different real or complex numbers. In
most textbooks, the divided differences of a function $f$ are defined
recursively:\ $\left[ x_{0};f\right] =f\left( x_{0}\right) $, \ldots , 
\begin{equation*}
\left[ x_{0},\ldots ,x_{k};f\right] =\frac{\left[ x_{1},\ldots ,x_{k};f%
\right] -\left[ x_{0},\ldots ,x_{k-1};f\right] }{x_{k}-x_{0}}
\end{equation*}%
In this paper we make use of the some properties of divided differences
gathered in the following lemmas.

\begin{lemma}
\label{lemma-divdiff-representation}The divided differences possess the
integral representation 
\begin{equation*}
\left[ x_{0},\ldots ,x_{k};f\right] =\int_{0}^{1}\int_{0}^{t_{1}}\cdots
\int_{0}^{t_{k-1}}f^{\left( k\right) }\left( x_{0}+\left( x_{1}-x_{0}\right)
t_{1}+\cdots +\left( x_{k}-x_{k-1}\right) t_{k}\right) dt_{k}\cdots
dt_{2}dt_{1},
\end{equation*}%
provided that $f^{\left( k-1\right) }$ is absolutely continuous.
\end{lemma}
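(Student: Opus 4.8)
The plan is to prove this by induction on $k$, following the classical Hermite--Genocchi route. Write $J_{k}(x_{0},\ldots ,x_{k})$ for the right-hand side, i.e. the $k$-fold integral of $f^{(k)}$ over the ordered simplex $1\geq t_{1}\geq \cdots \geq t_{k}\geq 0$ with argument $x_{0}+\sum_{j=1}^{k}(x_{j}-x_{j-1})t_{j}$. For the base case $k=1$ the claim reduces to $\int_{0}^{1}f'(x_{0}+(x_{1}-x_{0})t_{1})\,dt_{1}=(f(x_{1})-f(x_{0}))/(x_{1}-x_{0})$, which is exactly the fundamental theorem of calculus; it is legitimate because $f=f^{(0)}$ is assumed absolutely continuous and $x_{0}\neq x_{1}$.

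For the inductive step I would assume the identity for $k-1$ and integrate out the innermost variable. In $J_{k}$ the variable $t_{k}$ ranges over $[0,t_{k-1}]$ and enters the integrand only through the affine term $(x_{k}-x_{k-1})t_{k}$; writing $A=x_{0}+\sum_{j=1}^{k-1}(x_{j}-x_{j-1})t_{j}$ and integrating by the fundamental theorem of calculus (here using that $f^{(k-1)}$ is absolutely continuous, so that $f^{(k)}$ is its a.e.-derivative, and using $x_{k-1}\neq x_{k}$) yields the inner integral $\frac{1}{x_{k}-x_{k-1}}\bigl[f^{(k-1)}(A+(x_{k}-x_{k-1})t_{k-1})-f^{(k-1)}(A)\bigr]$. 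The two terms in $t_{k-1}$ in the upper argument combine, $(x_{k-1}-x_{k-2})t_{k-1}+(x_{k}-x_{k-1})t_{k-1}=(x_{k}-x_{k-2})t_{k-1}$, so that after integrating over the remaining $(k-1)$-simplex the first piece is exactly $J_{k-1}(x_{0},\ldots ,x_{k-2},x_{k})$ while the second is $J_{k-1}(x_{0},\ldots ,x_{k-1})$. Invoking the induction hypothesis to identify both with divided differences gives
\[
J_{k}(x_{0},\ldots ,x_{k})=\frac{[x_{0},\ldots ,x_{k-2},x_{k};f]-[x_{0},\ldots ,x_{k-1};f]}{x_{k}-x_{k-1}}.
\]

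It then remains to recognize the right-hand side as $[x_{0},\ldots ,x_{k};f]$. This is the defining three-term recurrence applied to the reordering that places $x_{k-1}$ first and $x_{k}$ last; combined with the symmetry of divided differences in their nodes it reproduces exactly the stated recursion, closing the induction. I expect the main obstacle to be the bookkeeping in this step: one must verify that after integrating out $t_{k}$ the surviving argument is precisely the Hermite--Genocchi argument for the node list with $x_{k-1}$ deleted (resp.\ with $x_{k}$ deleted), and that the resulting combination matches the \emph{defining} recursion rather than some other three-term relation, which is where the symmetry of the divided difference is genuinely used. A secondary, purely technical point is the justification of the fundamental theorem of calculus under the hypothesis: absolute continuity of $f^{(k-1)}$ guarantees that $f^{(k)}$ exists almost everywhere, is locally integrable, and satisfies $\int_{a}^{b}f^{(k)}=f^{(k-1)}(b)-f^{(k-1)}(a)$, while a Fubini/Tonelli argument legitimizes manipulating the iterated integral over the simplex as an ordinary repeated integral.
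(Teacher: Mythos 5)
Your proof is correct and takes exactly the route the paper itself indicates: the paper gives no proof of this lemma beyond the remark that it ``can be proved by induction on $k$'' with a citation to DeVore--Lorentz, and your Hermite--Genocchi induction (base case via the fundamental theorem of calculus, inductive step integrating out $t_{k}$) is the standard way of filling in that sketch. Your handling of the one delicate point is also sound: the three-term relation you obtain deletes $x_{k-1}$ and $x_{k}$ rather than $x_{0}$ and $x_{k}$, and you correctly observe that the symmetry of divided differences in their nodes (a standard fact provable from the recursive definition independently of this lemma) converts it into the defining recurrence.
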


This can be proved by induction on $k$ (see, e.g., \cite[Chapt. 4, \S 7, Eq.
(7.12) and below]{DeVore-Lorentz-Book-1993}).

\begin{lemma}
\label{lemma-divdiff-derivatives}Let $1\leq j\leq k$ and let $i_{1},\ldots
,i_{j}\in \left\{ 1,\ldots ,k\right\} $ be pairwise different integers.
Then, for each function $f$ having a derivative of order $k+j-1$, 
\begin{equation*}
\lim_{x_{1},\ldots ,x_{k}\rightarrow x}\frac{\partial ^{j}\left[
x_{1},\ldots ,x_{k};f\right] }{\partial x_{i_{1}}\cdots \partial x_{i_{j}}}=%
\frac{1}{\left( k+j-1\right) !}f^{\left( k+j-1\right) }\left( x\right)
\end{equation*}
\end{lemma}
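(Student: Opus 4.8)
The plan is to prove Lemma~\ref{lemma-divdiff-derivatives} by combining the integral representation of Lemma~\ref{lemma-divdiff-representation} with a careful analysis of how differentiation in the node variables interacts with that integral. First I would start from
\begin{equation*}
\left[ x_{1},\ldots ,x_{k};f\right] =\int_{0}^{1}\int_{0}^{t_{1}}\cdots \int_{0}^{t_{k-2}}f^{\left( k-1\right) }\left( x_{1}+\left( x_{2}-x_{1}\right) t_{1}+\cdots +\left( x_{k}-x_{k-1}\right) t_{k-1}\right) dt_{k-1}\cdots dt_{1},
\end{equation*}
which is the representation of Lemma~\ref{lemma-divdiff-representation} applied to the $k-1$ nodes $x_{1},\ldots ,x_{k}$ (relabelling so the first node is $x_{1}$). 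Writing the argument of $f^{(k-1)}$ as $u = \sum_{m=1}^{k} \lambda_m x_m$ with $\lambda_1 = 1-t_1$, $\lambda_m = t_{m-1}-t_m$ for $2\le m\le k-1$, and $\lambda_k = t_{k-1}$, I note that each $\partial/\partial x_{i}$ brings down exactly the coefficient $\lambda_i$ and raises the order of the derivative of $f$ by one. Hence
\begin{equation*}
\frac{\partial ^{j}\left[ x_{1},\ldots ,x_{k};f\right] }{\partial x_{i_{1}}\cdots \partial x_{i_{j}}}=\int_{0}^{1}\cdots \int_{0}^{t_{k-2}}\lambda_{i_{1}}\cdots \lambda_{i_{j}}\,f^{\left( k+j-1\right) }\left( u\right) dt_{k-1}\cdots dt_{1},
\end{equation*}
where the $\lambda_{i}$ are the (distinct) barycentric weights attached to the chosen nodes.

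Next I would pass to the limit $x_{1},\ldots ,x_{k}\to x$. Since $f^{(k+j-1)}$ is continuous by hypothesis and the argument $u$ tends uniformly to $x$ on the (bounded) simplex of integration, dominated convergence lets me pull $f^{(k+j-1)}(x)$ out of the integral. The remaining quantity is then the purely combinatorial integral
\begin{equation*}
\lim_{x_{1},\ldots ,x_{k}\rightarrow x}\frac{\partial ^{j}\left[ x_{1},\ldots ,x_{k};f\right] }{\partial x_{i_{1}}\cdots \partial x_{i_{j}}}=f^{\left( k+j-1\right) }\left( x\right)\int_{0}^{1}\int_{0}^{t_{1}}\cdots \int_{0}^{t_{k-2}}\lambda_{i_{1}}\cdots \lambda_{i_{j}}\,dt_{k-1}\cdots dt_{1}.
\end{equation*}
So the entire content of the lemma reduces to showing that this iterated integral of the product of $j$ distinct weights equals $1/(k+j-1)!$, independently of which $j$ of the $k$ indices are selected.

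The key step, and the one I expect to be the main obstacle, is the evaluation of that combinatorial integral and the verification that it does not depend on the choice of $i_{1},\ldots ,i_{j}$. There are two natural routes. The first is to exploit the symmetry of divided differences: $\left[ x_{1},\ldots ,x_{k};f\right]$ is a symmetric function of its nodes, so permuting the $x_m$ permutes the weights $\lambda_m$ and shows a priori that the limit is independent of the selection of indices; it therefore suffices to compute one representative case. The second, cleaner route is to bypass the integral altogether: differentiate the Hermite--Genocchi / Newton-form identity, or more directly apply the divided-difference Leibniz-type formula, using the fact that for coincident nodes $\left[ \underbrace{x,\ldots ,x}_{k};f\right] = f^{(k-1)}(x)/(k-1)!$ and that each derivative $\partial/\partial x_i$ acting on a confluent divided difference raises the order by one with a predictable factorial weight. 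I would likely present the symmetry argument to reduce to a single case, then evaluate the simplex integral explicitly (for instance with all chosen weights equal to a single $\lambda$-type factor), obtaining the Beta-function–type value $1/(k+j-1)!$. The delicate points are justifying the interchange of limit and integration uniformly over the simplex and confirming that the weight product integrates to the claimed reciprocal factorial regardless of the index selection.
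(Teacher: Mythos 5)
Your proposal is correct and follows essentially the same route as the paper's proof: apply the integral representation of Lemma~\ref{lemma-divdiff-representation} to the $k$ nodes $x_{1},\ldots ,x_{k}$, differentiate under the integral so that each $\partial /\partial x_{i_{\nu }}$ brings down its barycentric weight, pass to the limit to extract $f^{\left( k+j-1\right) }\left( x\right) $, and reduce the lemma to the simplex integral of the weight product, whose value $1/\left( k+j-1\right) !$ is independent of the chosen indices. The only differences are cosmetic: the paper makes the same symmetry reduction to $i_{\nu }=\nu $ at the outset and settles the simplex integral by an (unspecified) inductive argument, whereas your Dirichlet/Beta-function evaluation is an equally valid way to obtain the same constant, and your dominated-convergence justification of the limit interchange spells out a detail the paper passes over silently.
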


\begin{proof}[Proof of Lemma~\protect\ref{lemma-divdiff-representation}]
Because the divided differences are invariant with respect to the order of
knots we can restrict ourselves to the case $i_{\nu }=\nu $ $\left( \nu
=1,\ldots ,j\right) $. By Lemma~\ref{lemma-divdiff-representation}, we have 
\begin{eqnarray*}
&&\frac{\partial ^{j}\left[ x_{1},\ldots ,x_{k};f\right] }{\partial
x_{1}\cdots \partial x_{j}} \\
&=&\frac{\partial ^{j}}{\partial x_{1}\cdots \partial x_{j}}%
\int_{0}^{1}\int_{0}^{t_{1}}\cdots \int_{0}^{t_{k-2}}f^{\left( k-1\right)
}\left( x_{1}+\left( x_{2}-x_{1}\right) t_{1}+\cdots +\left(
x_{k}-x_{k-1}\right) t_{k-1}\right) dt_{k-1}\cdots dt_{2}dt_{1} \\
&=&\int_{0}^{1}\int_{0}^{t_{1}}\cdots \int_{0}^{t_{k-2}}f^{\left(
k+j-1\right) }\left( x_{1}\left( 1-t_{1}\right) +x_{2}\left(
t_{1}-t_{2}\right) +\cdots +x_{k}\left( t_{k-1}-t_{k}\right) \right) \\
&&\times \left( 1-t_{1}\right) \left( t_{1}-t_{2}\right) \cdots \left(
t_{j-1}-t_{j}\right) dt_{k-1}\cdots dt_{2}dt_{1},
\end{eqnarray*}%
where we put $t_{k}=0$. Taking the limit we obtain 
\begin{eqnarray*}
&&\lim_{x_{1},\ldots ,x_{k}\rightarrow x}\frac{\partial ^{j}\left[
x_{1},\ldots ,x_{k};f\right] }{\partial x_{1}\cdots \partial x_{j}} \\
&=&f^{\left( k+j-1\right) }\left( x\right)
\int_{0}^{1}\int_{0}^{t_{1}}\cdots \int_{0}^{t_{k-2}}\left( 1-t_{1}\right)
\left( t_{1}-t_{2}\right) \cdots \left( t_{j-1}-t_{j}\right) dt_{k-1}\cdots
dt_{2}dt_{1}.
\end{eqnarray*}%
An inductive argument shows that the multiple integral has the value $%
1/\left( k+j-1\right) !$ which completes the proof of Lemma~\ref%
{lemma-divdiff-derivatives}.
\end{proof}

Popoviciu \cite{Popoviciu-1940} proved the following formula for monomials.

\begin{lemma}
\label{lemma-popoviciu}For each integer $r\geq 0$, 
\begin{equation*}
\left[ x_{0},\ldots ,x_{k};z^{k+r}\right] =\sum x_{0}^{n_{0}}\cdots
x_{k}^{n_{k}},
\end{equation*}%
where the sum runs over all nonnegative integers $n_{0},\ldots ,n_{k}$
satisfying $n_{0}+\cdots +n_{k}=r$.
\end{lemma}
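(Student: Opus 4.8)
The plan is to recognize the right-hand side as the complete homogeneous symmetric polynomial
\[
h_r(x_0,\ldots,x_k):=\sum_{n_0+\cdots+n_k=r}x_0^{n_0}\cdots x_k^{n_k}\qquad(n_i\ge 0),
\]
and to prove the identity $[x_0,\ldots,x_k;z^{k+r}]=h_r(x_0,\ldots,x_k)$ by induction on the number of nodes (on $k$), simultaneously for all $r\ge 0$. The base case $k=0$ is immediate, since $[x_0;z^r]=x_0^r=h_r(x_0)$. Throughout I take the standard conventions $h_0=1$ and $h_m=0$ for $m<0$.

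For the inductive step I would apply the defining recurrence
\[
[x_0,\ldots,x_k;z^{k+r}]=\frac{[x_1,\ldots,x_k;z^{k+r}]-[x_0,\ldots,x_{k-1};z^{k+r}]}{x_k-x_0}.
\]
Each divided difference on the right involves only $k$ nodes, so writing $z^{k+r}=z^{(k-1)+(r+1)}$ and invoking the induction hypothesis gives $[x_1,\ldots,x_k;z^{k+r}]=h_{r+1}(x_1,\ldots,x_k)$ and likewise $[x_0,\ldots,x_{k-1};z^{k+r}]=h_{r+1}(x_0,\ldots,x_{k-1})$. Thus the whole claim reduces to the purely algebraic identity
\[
h_{r+1}(x_1,\ldots,x_k)-h_{r+1}(x_0,\ldots,x_{k-1})=(x_k-x_0)\,h_r(x_0,\ldots,x_k).
\]

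To establish this I would use the elementary one-variable extension rule $h_m(A\cup\{y\})=h_m(A)+y\,h_{m-1}(A\cup\{y\})$, obtained by separating the degree-$m$ monomials according to whether the exponent of $y$ is zero or positive. Writing $S=\{x_1,\ldots,x_{k-1}\}$ for the shared variables and applying the rule once with $y=x_k$ and once with $y=x_0$, the left-hand side collapses to $x_k\,h_r(S\cup\{x_k\})-x_0\,h_r(S\cup\{x_0\})$. A second application of the same rule to $h_r(S\cup\{x_0,x_k\})$, extending once by $x_k$ and once by $x_0$, produces two relations; clearing the common $h_{r-1}(S\cup\{x_0,x_k\})$ term between them yields exactly $(x_k-x_0)\,h_r(S\cup\{x_0,x_k\})=x_k\,h_r(S\cup\{x_k\})-x_0\,h_r(S\cup\{x_0\})$, which is the identity we need. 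This symmetric-function identity is the only real obstacle; everything else is bookkeeping on the indices.

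An attractive alternative that bypasses the induction altogether is the generating-function route, which I would at least mention as a cross-check. Using the classical evaluation $\bigl[x_0,\ldots,x_k;(w-z)^{-1}\bigr]=\prod_{i=0}^{k}(w-x_i)^{-1}$ together with linearity of the divided difference in $f$, one expands $(1-tz)^{-1}=\sum_{m\ge 0}t^m z^m$, observes that $[x_0,\ldots,x_k;z^m]=0$ whenever $m<k$ (a polynomial of degree below $k$ has vanishing $k$-th divided difference), and compares this with $t^k\prod_{i=0}^{k}(1-tx_i)^{-1}=\sum_{r\ge 0}h_r(x_0,\ldots,x_k)\,t^{\,k+r}$. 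Reading off the coefficient of $t^{\,k+r}$ gives the result directly. I would present the inductive argument as the self-contained proof and note the generating-function identity as confirmation.
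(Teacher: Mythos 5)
Your proof is correct, and it is necessarily a different route from the paper's, because the paper offers no proof at all: this lemma is stated with a bare attribution to Popoviciu's 1940 paper. Your induction is sound in all details. The base case $[x_0;z^r]=x_0^r=h_r(x_0)$ is right; the re-indexing $z^{k+r}=z^{(k-1)+(r+1)}$ correctly matches the induction hypothesis (taken for all $r\ge 0$ simultaneously, as you stipulate) to the two $k$-node divided differences in the recurrence; and the reduction to the identity $h_{r+1}(x_1,\ldots,x_k)-h_{r+1}(x_0,\ldots,x_{k-1})=(x_k-x_0)\,h_r(x_0,\ldots,x_k)$ is exactly what is needed. Your derivation of that identity also checks out: two applications of the extension rule $h_m(A\cup\{y\})=h_m(A)+y\,h_{m-1}(A\cup\{y\})$ collapse the left side to $x_k\,h_r(S\cup\{x_k\})-x_0\,h_r(S\cup\{x_0\})$, and expanding $h_r(S\cup\{x_0,x_k\})$ once via $x_k$ and once via $x_0$ and eliminating the common $h_{r-1}(S\cup\{x_0,x_k\})$ term gives $(x_k-x_0)\,h_r(S\cup\{x_0,x_k\})=x_k\,h_r(S\cup\{x_k\})-x_0\,h_r(S\cup\{x_0\})$, with your conventions $h_0=1$ and $h_m=0$ for $m<0$ covering the edge cases $r=0$ and $S=\emptyset$ (i.e., $k=1$). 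The generating-function cross-check is likewise valid; the only point worth making explicit is that interchanging $[x_0,\ldots,x_k;\,\cdot\,]$ with the series $\sum_{m\ge 0}t^m z^m$ is legitimate because, for pairwise distinct nodes, the divided difference is a finite linear combination of point evaluations, so termwise application is justified for $|t|\max_i|x_i|<1$ (or one works formally in $t$). What your argument buys over the paper's citation is a self-contained, elementary proof; the generating-function variant additionally explains structurally why $h_r$ appears, namely as the coefficients in the expansion of the Cauchy-kernel identity $[x_0,\ldots,x_k;(w-z)^{-1}]=\prod_{i=0}^{k}(w-x_i)^{-1}$, which matches $t^k\prod_{i=0}^{k}(1-tx_i)^{-1}$, the classical generating function of the complete homogeneous symmetric polynomials.
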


\section{Proof of the main theorems}

\begin{proof}[Proof of Theorem~\protect\ref{th-main-theorem-1}]
By Eq. $\left( \ref{def-Gkl}\right) $ and Lemma~\ref{lemma-popoviciu}, we
have 
\begin{eqnarray*}
G_{k,\ell }\left( x_{1},\ldots ,x_{k}\right) &=&\dsum\limits_{n=0}^{\infty
}g_{n+\ell }\dsum\limits_{n_{1}+\cdots +n_{k}=n}x_{1}^{n_{1}}\cdots
x_{k}^{n_{k}}=\dsum\limits_{n=0}^{\infty }g_{n+\ell }\left[ x_{1},\ldots
,x_{k};z^{k-1+n}\right] \\
&=&\dsum\limits_{n=0}^{\infty }g_{n+\ell }\left[ x_{1},\ldots
,x_{k};z^{k-1+n}\right] =\left[ x_{1},\ldots ,x_{k};z^{k-1}g_{\ell }\left(
z\right) \right]
\end{eqnarray*}%
which completes the proof.
\end{proof}

\begin{proof}[Proof of Theorem~\protect\ref{th-main-theorem-2}]
By Theorem~\ref{th-main-theorem-1}, we have 
\begin{equation*}
G_{k,\ell }\left( x_{1},\ldots ,x_{k}\right) =\left[ x_{1},\ldots
,x_{k};z^{k-1}g_{\ell }\left( z\right) \right]
\end{equation*}%
and Theorem~\ref{th-main-theorem-2} is a consequence of Lemma~\ref%
{lemma-divdiff-representation}.
\end{proof}

\strut

\thispagestyle{empty}

~\vfill

\end{document}